\setlist{nosep}
\setlist[enumerate,1]{label=\emph{(}\alph*\emph{)}\,,align=right}
\newcommand{\NN}{\mathbb{N}}
\let\SS\relax
\newcommand{\SS}{\mathbb{S}}
\newcommand{\RR}{\mathbb{R}}
\let\C\relax
\newcommand{\C}{\mathcal{C}}
\let\O\relax
\newcommand{\O}{\mathcal{O}}
\let\P\relax
\newcommand{\P}{\mathcal{P}}
\newcommand{\V}{\mathcal{V}}
\newcommand{\tC}{\mathtt{C}}
\newcommand{\op}{\mathrm{op}}
\newcommand{\monop}{\mathbb{N}^{\times,\op}_+}
\newcommand{\comm}{\mathsf{Comm}}
\newcommand{\sh}{\mathsf{Sh}}
\newcommand{\psh}{\mathsf{PSh}}
\newcommand{\pts}{\mathsf{Pts}}
\newcommand{\loc}{\mathsf{Loc}}
\newcommand{\topo}{\mathsf{Top}}
\newcommand{\ofp}{\mathrm{fp}}
\newcommand{\y}{\mathbf{y}}
\newcommand{\pro}{\mathrm{pro}}
\newcommand{\cl}{\mathrm{cl}}
\newcommand{\bcl}{\mathbf{cl}}
\newcommand{\bcoh}{\mathbf{coh}}
\newcommand{\bep}{\mathbf{ep}}
\newcommand{\bgt}{\mathbf{gt}}
\newcommand{\bp}{\mathbf{p}}
\newcommand{\bx}{\mathbf{x}}
\newcommand{\join}{\vee}
\newtheorem{definition}{Definition}
\newtheorem{proposition}[definition]{Proposition}
\newtheorem{theorem}[definition]{Theorem}
\newtheorem{corollary}[definition]{Corollary}
\newtheorem{lemma}[definition]{Lemma}
\newtheorem{example}[definition]{Example}
\newtheorem{remark}[definition]{Remark}
\tikzset{
b/.style={bend left=10},
bb/.style={bend left},
cl/.style={outer sep=-1pt},
}
\let\phi\varphi
\let\theta\vartheta
\let\epsilon\varepsilon
\newcommand\twoheaddownarrow{{\rotatebox[origin=c]{-90}{$\twoheadrightarrow$}}}
\definecolor{llgray}{gray}{0.9}
\definecolor{lgray}{gray}{0.8}
\title{Grothendieck topologies on posets}
\author{Jens Hemelaer}
\thanks{The author is a Ph.D.\ fellow of the Research Foundation -- Flanders (FWO)}
\address{Department of Mathematics, University of Antwerp \\ 
 Middelheimlaan 1, B-2020 Antwerp (Belgium) \\ {\tt jens.hemelaer@uantwerpen.be}}
\begin{document}

\begin{abstract}
Lindenhovius has studied Grothendieck topologies on posets and has given a complete classification in the case that the poset is Artinian. We extend his approach to more general posets, by translating known results in locale and domain theory to the study of Grothendieck topologies. In particular, explicit descriptions are given for the family of Grothendieck topologies with enough points and the family of Grothendieck topologies of finite type. As an application, we compute the cardinalities of these families in various examples.
\end{abstract}

\maketitle
\tableofcontents

\section{Introduction}

When studying a category from a geometrical point of view, it is often desirable to have a concrete description of a certain Grothendieck topology on the category. This concrete description includes for example determining the points for such a Grothendieck topology, or answering whether or not the associated topos is coherent or subcanonical.

One example is the category $\C = \comm_\ofp^\op$, the opposite category of the category of finitely presented commutative rings. In \cite{gabber-kelly}, very explicit descriptions are given for the points of various Grothendieck topologies.\footnote{Note that Gabber and Kelly work with sheaves on the category of separated schemes of finite type (and relative versions thereof). However, this is equivalent to sheaves on $\comm_\ofp^\op$ whenever the Grothendieck topology is finer than the Zariski topology \cite[Proof of Remark 2.4]{gabber-kelly}.} As a demonstration of why this is useful, they point out an application to sheaf cohomology \cite[Proposition 4.5]{gabber-kelly}. For this category $\comm_\ofp^\op$, it is already an open problem to give a complete description for the points of the flat topology (some partial results are in \cite[Lemma 3.3]{gabber-kelly} and in \cite{schroeer}).

However, if the category $\C$ is a poset, it is much easier to describe the Grothen\-dieck topologies on it. The reason is that $\psh(\C)$ is equivalent to the category of sheaves on a topological space $X$, and Grothendieck topologies are in bijection to sublocales of $\O(X)$ (the locale of opens of $X$).

In this paper, we exploit this fact to describe the Grothendieck topologies on posets. Of course, locales, posets, and their interactions are a very well-studied topic, so all the necessary ingredients are already in the literature. Our contribution is translating these results to topos theory, and in this way extending the results of \cite{lindenhovius}.

The underlying idea of this work is that a better understanding of Grothendieck topologies on posets can help us understanding Grothendieck topologies on more general categories. For example, in \cite[Proposition 2]{llb-covers}, it is shown that any Grothendieck topology on the monoid $\tC = \monop$ (the underlying monoid for the Arithmetic Site of Connes and Consani), comes from a Grothendieck topology on a certain poset (the big cell). So in this case, a classification of Grothendieck topologies on a related poset leads to a classification of Grothendieck topologies on the original category of interest. Similarly, in \cite{azureps} and \cite{azutop}, Grothendieck topologies on the big cell are used to study Grothendieck topologies on the category of Azumaya algebras an center-preserving algebra maps. Here an important part is determining which topologies on the big cell correspond to coherent subtoposes. We will show in Section \ref{sect:coherent} how to approach this problem for a very general class of posets.

\section{Presheaves on a poset} \label{sect:presheaves-on-a-poset}

Posets $P$ will always be identified with the corresponding category that has
\begin{itemize}
\item as objects the elements of $P$;
\item a unique morphism $p \to q$ whenever $p \leq q$.
\end{itemize}
This is the same convention as in \cite{lindenhovius}. We will later define the dcpo of filters on $P$. There is an embedding $P^\op \subseteq X$, so it can be useful to keep in mind that the ordering in $X$ is opposite to the ordering in $P$.

It is well-known that a category of sheaves on a poset is a localic topos. The category of \emph{presheaves} on a poset in addition has enough points, so it is equivalent to $\sh(X)$ for some topological space $X$. Moreover, we can assume that this space $X$ is sober, and then it is uniquely determined. Indeed, the elements of $X$ can be identified with the topos points of $\psh(P)$ and the topology is the subterminal topology, see \cite[2.1]{caramello-stone}.

The category of points of $\psh(P)$ is equivalent to $(P_\pro)^\op$, so clearly it is again a poset. We can describe $(P_\pro)^\op$ explicitly as the poset of filters on $P$ under the inclusion relation. In particular, we identify the elements of $X$ with the filters on $P$. Subterminal objects in $\psh(\C)$ are in bijection with the downwards closed subsets in $P$. For a downwards closed subset $U$ of $P$, the corresponding open set for the subterminal topology is the set $\tilde{U}$ containing all filters intersecting $U \subseteq P$. For each $U$, there is a set of minimal filters contained in $U$, namely the set of principal filters associated to the generators of $U$. So we can write the open sets as
\begin{equation} \label{eq:ideal-notation}
(p_i)_{i \in I} = \{ x \in X : \exists i\in I,~ p_i \leq x  \} \subseteq X,
\end{equation}
where $\{p_i\}_{i \in I}$ is an indexed family of principal filters, and $\leq$ denotes the partial order on the filters (i.e.\ inclusion of filters).\footnote{The notation $(p_i)_{i \in I}$ is inspired by the ideal notation for rings. Caveat: poset ideals are downwards closed, so $(p_i)_{i \in I}$ is almost never a poset ideal.}

Notice that filters on $P$ are exactly the same thing as ideals on $P^\op$, so this gives another point of view. In fact, this point of view is the most common in domain theory: for a poset $P$, the poset of ideals of $P$ with the inclusion relation is called the \emph{ideal completion}. So we can describe the category of points of $\psh(P)$ as the ideal completion of $P^\op$. Ideal completions are so-called \emph{algebraic dcpo's}, with the principal ideals as \emph{finite elements} \cite[Proposition 5.1.46]{goubault-larrecq}.

\begin{definition}[{\cite[Section 5.1]{goubault-larrecq}}]
Let $X$ be a poset. Then for $a,b \in X$ we write
\begin{equation}
a \ll b
\end{equation}
if and only if $b \leq \sup(D)$ implies $\exists d \in D,~ a \leq d$, for any directed subset $D\subseteq X$ admitting a supremum.\footnote{A directed subset $D$ is a subset for which any finite collection of elements in $D$ has an upper bound in $D$. In particular, it is nonempty.} In this case, we say that $a$ is \emph{way below} $b$ (this clearly implies $a \leq b$). We write
\begin{equation}
\twoheaddownarrow x = \{ y \in X : y \ll x  \}.
\end{equation}
We say that $x \in X$ is \emph{finite} if $x \ll x$. We say that $X$ is \emph{algebraic} if for each $x \in X$ the set of finite elements smaller than $x$ is directed with supremum $x$. We call a subset $B \subseteq X$ a \emph{basis} if for each $x \in X$, the set $B \cap \twoheaddownarrow x$ is directed with supremum $x$. We say that $X$ is \emph{continuous} if for each $x \in X$ the set $\twoheaddownarrow x$ is directed with supremum $x$. We say that $X$ is a \emph{dcpo} if any nonempty directed subset of $X$ admits a supremum.
\end{definition}

It is easy to show that a poset $X$ is algebraic if and only if the set of finite elements is a basis, and it is continuous if there exists a basis. In particular, every algebraic poset is continuous \cite[Lemma 5.1.23, Corollary 5.1.24]{goubault-larrecq}.

\begin{definition}[{The Scott topology, \cite[Section 4.2]{goubault-larrecq}}]
Let $X$ be a poset. Then a subset $U \subseteq X$ is called Scott-open if it is upwards closed and if for any directed subset $D \subseteq X$ with $\sup(D) \in U$ we have $U \cap D \neq \varnothing$.

For algebraic posets, the Scott topology has as basis of open sets the sets
\[
\uparrow x = \{ y \in X : x \leq y \}
\]
where $x$ ranges over the finite elements of $X$ \cite[Theorem 5.1.27]{goubault-larrecq}.

The specialization order associated to the Scott topology is the original partial order on $X$.\footnote{Here we define the specialization order on the points of a topological space as $x \leq y$ if and only if there is an inclusion of point closures $\cl(x)\subseteq \cl(y)$. The opposite convention is popular as well.}
\end{definition}

We can now summarize the above results with the following corollary. The content of this corollary can be found e.g.\ in \cite[Subsection 10.2]{amadio-curien}, but no topos-theoretic language is used there. On the other hand, in \cite[Subsection 4.2]{caramello-stone}, the relation to topos theory is discussed and \cite[Proposition 4.1]{caramello-stone} is very close to the corollary below.

\begin{corollary}[See also {\cite[Proposition 4.1]{caramello-stone}}.]
There is an equivalence of categories between:
\begin{enumerate}
\item the category of localic presheaf toposes and geometric morphisms between them (considered upto natural isomorphism);
\item the category of algebraic dcpo's and Scott continuous morphisms between them.\footnote{Scott continuous maps can be defined in two equivalent ways: either as monotonous maps preserving directed suprema, or literally as maps that are continuous for the Scott topology \cite[Theorem 7.3.1(iv)]{vickers}.}
\end{enumerate}
\end{corollary}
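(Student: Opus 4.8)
The strategy is to construct the equivalence by exhibiting the two functors and showing they are mutually quasi-inverse. In one direction, a localic presheaf topos $\E$ is equivalent to $\psh(P)$ for some poset $P$; by the discussion preceding the corollary, its category of points is the ideal completion of $P^\op$, which is an algebraic dcpo. A geometric morphism $\psh(P) \to \psh(Q)$ induces a functor on categories of points, i.e.\ a morphism of posets between the ideal completions; one must check this morphism is Scott continuous, which amounts to the observation that the inverse image functor, being a left exact left adjoint, preserves filtered colimits and hence the induced map on points preserves directed suprema. In the other direction, an algebraic dcpo $X$ has a basis of finite elements $B$, and we send it to $\psh(B^\op)$ (equivalently, to $\sh(X)$ for the Scott topology, using that the Scott opens are generated by the $\uparrow x$ with $x$ finite, as recalled above). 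A Scott continuous map $f\colon X \to Y$ is continuous for the Scott topology, hence induces a geometric morphism $\sh(X) \to \sh(Y)$ of the associated localic toposes.

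First I would make precise the object-level bijection: every localic presheaf topos is $\psh(P)$, and two posets $P, P'$ give equivalent presheaf toposes iff their ideal completions (of the opposites) are isomorphic as dcpo's — this follows because $\psh(P) \simeq \sh(X)$ with $X$ sober and $X$ recovered as the topos points with the subterminal topology, and sober spaces are determined by their frames of opens, which here are the Scott-open subsets of the algebraic dcpo. So the essential image on objects is exactly the algebraic dcpo's, and I would fix, for each such dcpo, the canonical presentation via its poset of finite elements. Next I would verify functoriality and full faithfulness on morphisms. Full faithfulness is the crux: a geometric morphism $g\colon \sh(X) \to \sh(Y)$ between these localic toposes is determined by the frame map $\O(Y) \to \O(X)$, equivalently by a continuous map $X \to Y$ of sober spaces (since both are sober), equivalently by a Scott continuous map of the underlying dcpo's. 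Here one uses that a frame map between frames of Scott-opens on algebraic dcpo's corresponds exactly to a directed-sup-preserving monotone map in the other direction — this is the locale-theoretic content, and it is where I would cite the standard domain-theoretic fact (the paper points to \cite{vickers} Theorem 7.3.1 and the surrounding material in \cite{amadio-curien}, \cite{goubault-larrecq}).

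The step I expect to be the main obstacle is pinning down the correct handling of \emph{non-sober} subtleties and the direction conventions: one must be careful that geometric morphisms $\psh(P) \to \psh(Q)$ are considered only up to natural isomorphism (as the statement specifies), so that the functor to dcpo's is well-defined on isomorphism classes, and that the Scott topology on an algebraic dcpo is genuinely sober (it is, being the spectrum of its frame of Scott-opens) so that no information is lost passing back and forth. A secondary technical point is checking that the passage ``geometric morphism $\leadsto$ functor on points $\leadsto$ Scott continuous map'' is inverse to ``Scott continuous map $\leadsto$ geometric morphism of sheaf toposes'': this is essentially the statement that for a sober space the points-functor and the sheaves-functor are mutually inverse equivalences between sober spaces and localic toposes with enough points, restricted to the algebraic-dcpo case, together with the compatibility of this equivalence with the Scott topology recalled in the second definition above. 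Once these compatibilities are in place, the equivalence of categories follows formally.
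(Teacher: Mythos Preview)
Your proposal is correct and takes essentially the same approach as the paper: establish the object-level correspondence via filters/finite elements, then obtain the morphism correspondence from the full embedding of sober spaces into toposes. The paper is a touch more economical on the morphism side, invoking \cite[IX.3, Corollary 4 and IX.5, Proposition 2]{maclane-moerdijk-sheaves} directly rather than verifying Scott continuity of the induced map on points by hand, but the content is the same and the technical worries you flag (soberness, well-definedness up to natural isomorphism) are absorbed by that citation.
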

\begin{proof}
If a presheaf topos $\psh(\C)$ is localic, then $\C$ embeds (contravariantly) in the category of points of $\psh(\C)$, so $\C = P$ is a poset. Let $X$ be the algebraic dcpo of filters on $P$. Conversely, any algebraic dcpo $X$ is the category of points for $\psh(F^\op)\simeq \sh(X)$ where $F \subseteq X$ is the subset of finite elements.

The equivalence of categories now follows by the fact that the category of sober topological spaces and continuous maps is a full subcategory of the category of toposes and geometric morphisms (considered upto natural isomorphism) \cite[IX.3, Corollary 4]{maclane-moerdijk-sheaves} \cite[IX.5, Proposition 2]{maclane-moerdijk-sheaves}.
\end{proof}

\section{Grothendieck topologies versus subsets}

Let $P$ be a poset, and let $X$ be the dcpo of filters on $P$. Then by the results of the previous section $\psh(P) \simeq \sh(X)$, where $X$ is equipped with the Scott topology. The Grothendieck topologies on $P$ are in bijective correspondence with the subtoposes of $\psh(P) \simeq \sh(X)$, which in turn are in bijective correspondence with the sublocales of $\O(X)$ \cite[IX.5, Corollary 6]{maclane-moerdijk-sheaves}. In particular, the Grothendieck topologies with enough points correspond to the spatial sublocales, which are in bijective correspondence with the sober subspaces of $X$ \cite[IX.3, Corollary 4]{maclane-moerdijk-sheaves}.

If $X$ is a sober space, then its sober subspaces are precisely the closed subspaces for a topology called the \emph{strong topology} \cite[Corollary 3.5]{keimel-lawson}.

\begin{definition}[{\cite{keimel-lawson},\cite[Exercise V-5.31]{compendium-2}}] Let $X$ be a topological space. Then we say that $Y \subseteq X$ is \emph{locally closed} if it is the intersection of an open set with a closed set. The locally closed sets clearly form a basis for a new topology, which we call the \emph{strong topology} on $X$.

If $X$ with the original topology is $T_0$, then the strong topology is the one generated by the original topology and the lower sets in the specialization order.
\end{definition}

\begin{corollary}[{Corollary of \cite[IX.3, Corollary 4]{maclane-moerdijk-sheaves}}] \label{cor:correspondence-ep-subspace} Let $P$ be a poset and let $X$ be its dcpo of filters (with the Scott topology). Then there is an adjunction\footnote{Adjunctions between categories that are posets are usually called Galois connections.}
\begin{equation}
\begin{tikzcd}[column sep = large]
\P(X) \ar[r,b,"{K_{(-)}}"] &
\mathrm{GT}(P)^\op \ar[l,b,"{S_{(-)}}"]
\end{tikzcd}
\end{equation}
between the poset $\P(X)$ of subsets of $X$ (and inclusions), and the opposite of the poset $\mathrm{GT}(P)$ of Grothendieck topologies on $P$ (and inclusions).

This adjunction restricts to an equivalence
\begin{equation}
\V(X) ~\simeq~ \mathrm{GT}_{\mathrm{ep}}(P)^\op
\end{equation}
where $\V(X)^\op$ denotes the full subcategory of subspaces that are closed for the strong topology, and $\mathrm{GT}_{\mathrm{ep}}(P)$ denotes the full subcategory of Grothendieck topologies with enough points. Moreover, $\V(X)$ and $\mathrm{GT}_{\mathrm{ep}}(P)^\op$ are the images of $S_{(-)}$ and $K_{(-)}$, respectively.
\end{corollary}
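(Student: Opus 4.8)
The plan is to set up the two maps explicitly and then deduce everything formally from a Galois connection together with the facts recalled just above. Grothendieck topologies on $P$ correspond to subtoposes of $\psh(P)\simeq\sh(X)$, hence to sublocales of $\O(X)$, with a finer topology corresponding to a smaller sublocale; write $L_J\le\O(X)$ for the sublocale attached to $J$. Define $S_J\subseteq X$ to be the set of points of $L_J$ (equivalently, the points of the subtopos $\sh(P,J)$), regarded as a subspace of $X$; this is monotone as a map $\mathrm{GT}(P)^{\op}\to\P(X)$. For $Y\subseteq X$ define $K_Y$ to be the largest Grothendieck topology $J$ with $Y\subseteq S_J$. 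First I would check this is well-defined: the collection $\{J:Y\subseteq S_J\}$ is nonempty (it contains the trivial topology, for which $S_J=X$) and closed under arbitrary joins in $\mathrm{GT}(P)$, because a join of topologies corresponds to the meet of the associated sublocales, and a point of $\O(X)$ factors through a meet $\bigwedge_i L_i$ of sublocales iff it factors through every $L_i$; so $S_{\bigvee_i J_i}=\bigcap_i S_{J_i}$, and the join of the collection is again in it.

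With these definitions the Galois connection $K_{(-)}\dashv S_{(-)}$ drops out: $J\subseteq K_Y$ iff $Y\subseteq S_J$, because on one hand $Y\subseteq S_{K_Y}$ (by the meet formula) and $S_{(-)}$ is monotone, giving $Y\subseteq S_{K_Y}\subseteq S_J$ when $J\subseteq K_Y$, and on the other hand any $J$ with $Y\subseteq S_J$ lies in the collection whose join is $K_Y$. This is exactly the asserted adjunction between $\P(X)$ and $\mathrm{GT}(P)^{\op}$, with $K_{(-)}$ on the left, and a Galois connection automatically restricts to an order-isomorphism between the image of the left adjoint and the image of the right adjoint.

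It remains to identify these two images. The image of $S_{(-)}$ consists of the subspaces of $X$ that occur as point-sets of sublocales of $\O(X)$. These are exactly the sober subspaces: the point-set of any sublocale, with its subspace topology, is sober, and conversely a sober subspace $Y$ is the point-set of the sublocale given by the frame surjection $\O(X)\twoheadrightarrow\O(Y)$. By \cite[Corollary 3.5]{keimel-lawson} the sober subspaces are precisely the strongly closed ones, i.e.\ the image of $S_{(-)}$ is $\V(X)$. For the image of $K_{(-)}$: one checks that $K_Y$ is the topology whose sublocale is the spatial sublocale determined by the sober subspace $\overline Y$ (the strong closure of $Y$), so $K_Y$ always has enough points; conversely, if $J$ has enough points then $L_J$ is spatial, hence coincides with the sublocale determined by its point-set $S_J$, and therefore $J=K_{S_J}$ lies in the image. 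Thus the image of $K_{(-)}$ is $\mathrm{GT}_{\mathrm{ep}}(P)^{\op}$, and the restricted order-isomorphism is the claimed equivalence $\V(X)\simeq\mathrm{GT}_{\mathrm{ep}}(P)^{\op}$; along the way one reads off $S_{K_Y}=\overline Y$ and, for $J$ with enough points, $K_{S_J}=J$.

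The step I expect to be the main obstacle is not a single hard argument but getting the locale-theoretic dictionary entirely right: the order-reversal between Grothendieck topologies and sublocales, the fact that point-sets commute with meets of sublocales, and --- most delicately --- that the point-set of a sublocale, taken with its subspace topology from $X$, really is a sober space, so that ``sober subspace of $X$'', ``strongly closed subspace of $X$'' and ``spatial sublocale of $\O(X)$'' are genuinely three descriptions of one poset. Once that bookkeeping is fixed, everything else is formal manipulation of a Galois connection.
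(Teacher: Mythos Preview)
Your proof is correct. The paper takes a slightly different route to the same adjunction: rather than defining $K_Y$ abstractly as the join $\bigvee\{J : Y \subseteq S_J\}$ and verifying the Galois condition by hand, it observes that the global adjunction $\O \dashv \pts$ between $\topo$ and $\loc$ from \cite[IX.3, Corollary 4]{maclane-moerdijk-sheaves} restricts to subobjects of $X$ and of $\O(X)$ respectively, via a naturality-square argument; the resulting functor sends $Y \subseteq X$ directly to the sublocale $\O(Y) \hookrightarrow \O(X)$ (and hence to the corresponding Grothendieck topology). Your approach has the virtue of making the Galois-connection structure completely explicit and self-contained, at the cost of invoking the lemma that $\pts$ takes meets of sublocales to intersections of point-sets. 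The paper's approach is shorter once the cited adjunction is granted, and it immediately yields the concrete formula for $K_Y$ displayed just after the proof, rather than an abstract supremum. The identification of the two images---sober (equivalently strongly closed) subspaces on one side, topologies with enough points on the other---is the same in both arguments and rests on the same cited facts.
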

\begin{proof}
A similar adjunction is given in \cite[IX.3, Corollary 4]{maclane-moerdijk-sheaves}. There it is proven that the two functors
\begin{equation}
\begin{tikzcd}
\O ~:~ \topo~ \ar[r,shift left=1] & \ar[l,shift left=1] ~\loc ~:~ \pts
\end{tikzcd}
\end{equation}
are adjoint (with $\O$ the left adjoint). Here $\O(X)$ is the locale of opens of $X$, and $\pts(L)$ is the space of points of $L$, with the subterminal topology. The adjoint functors are equivalences if we restrict to the category of locales with enough points on the left, and the category of sober spaces on the right (see locus citus). 

In our situation, we can identify $\P(X)$ with the equivalence classes of embeddings $Y \hookrightarrow X$. Similarly, $\mathrm{GT}(P)^\op$ can be identified with the equivalence classes of locale embeddings $L \hookrightarrow \O(X)$. If $Y \hookrightarrow X$ is a topological embedding, one can check that $\O(Y) \hookrightarrow \O(X)$ is an embedding of locales. Conversely, if $L \hookrightarrow \O(X)$ is an embedding of locales, then $\pts(L) \hookrightarrow X$ is a topological embedding of the spaces of points.

The adjunction between $\topo$ and $\loc$ gives rise to a commuting diagram
\begin{equation}
\begin{tikzcd}
\loc(\O(Y),L) \ar[r,"{\sim}"] \ar[d,"{j \circ (-)}"] & \topo(Y,\pts(L)) \ar[d,"{j \circ (-)}"] \\
\loc(\O(Y),\O(X)) \ar[r,"{\sim}"] & \topo(Y,X)
\end{tikzcd},
\end{equation}
for $i: Y \hookrightarrow X$ a topological embedding and $j : L \hookrightarrow \O(X)$ a locale embedding. This induces a natural bijection between the fibers of $j \in \loc(\O(Y),\O(X)) \simeq \topo(Y,X)$ and this shows that $\O$ and $\pts$ restrict to an adjunction between $\P(X)$ and $\mathrm{GT}(P)^\op$. 

So we have proven the existence of the adjunction. The remaining statements follow directly from \cite[IX.3]{maclane-moerdijk-sheaves}, and the fact that adjunction is induced by the adjunction of \cite[IX.3, Corollary 4]{maclane-moerdijk-sheaves}.
\end{proof}

Take a sublocale $L \subseteq \O(X)$ corresponding to a Grothendieck topology $J$ on $P$. Then from the above proof it follows that $S_J = \pts(L) = \pts(\sh(P,J))$. Using \cite[Proposition 1.4]{gabber-kelly}, we can compute
\begin{equation} \label{eq:SJ}
S_J = \left\{ x \in X ~:~ p \leq x ~\Rightarrow~ x \in\!\! \bigcap_{L \in \Omega_J(p)} \!\widetilde{L}  \right\}.
\end{equation}
Here $\Omega_J(p)$ denotes the set of $J$-covering sieves on $p$, and $\tilde{L}$ denotes the upwards closure of $L$ in $X$ (in other words, the Scott open set with as generators the generators of $L$).

Now take $Y \subseteq X$. The corresponding locale embedding is defined by the frame homomorphism $U \mapsto U \cap Y$. So the covering sieves $L$ on $p$ are the ones such that $\tilde{L} \cap (p)$ contains $Y$, where $\tilde{L}$ is the upwards closure of $L$ in $X$ (or in other words, the Scott open set corresponding to $L$). So we can write
\begin{equation} \label{eq:KY}
K_Y(p) = \left\{\vphantom{\int} L \text{ sieve on }p\text{ such that }\tilde{L}\supseteq (p)\cap Y  \right\}.
\end{equation}
This gives a concrete description for the functors in Corollary \ref{cor:correspondence-ep-subspace}.

\section{Grothendieck topologies of finite type} \label{sect:coherent}

A Grothendieck topology is called \emph{of finite type} if every covering sieve contains a finitely generated covering sieve. In this section, we will relate Grothendieck topologies of finite type to coherent subtoposes and to patches (as introduced by Hochster in \cite{hochster}).

We recall some definitions from \cite[Subsection 7.3]{johnstone-topos}. An object $E$ in a topos is called \emph{compact} if any epimorphic family with codomain $E$ contains a finite epimorphic subfamily; the object $E$ is called \emph{coherent} if it is compact and any diagram $E' \rightarrow E \leftarrow E''$ of compact objects has a compact pullback.\footnote{In \cite{sga4-2}, ``compact'' is called ``quasi-compact''. Similarly, in this paper we use the word ``compact'' for (not necessarily Hausdorff) topological spaces such that every open cover admits a finite subcover.}

\begin{definition}[{\cite[Expos\'e vi, 2.4.5 and D\'efinition 3.1]{sga4-2}}] \label{def:coherent-topos} A \emph{coherent topos} is a topos containing a full subcategory $K$ of compact generators, that is moreover closed under finite limits. In this case, the objects of $K$ are all coherent.

A \emph{coherent geometric morphism} is a geometric morphism between coherent to\-poses such that the inverse image functor preserves coherent objects. We say that a subtopos of a coherent topos is a \emph{coherent subtopos} if it is coherent and if the inclusion geometric morphism is coherent as well.
\end{definition}

As remarked by Johnstone in \cite[Remark 7.48]{johnstone-topos}, one family of coherent toposes is given by the categories of sheaves $\sh(X)$ where $X$ is a \emph{spectral space} in the sense of Hochster \cite{hochster}.

\begin{definition}[{\cite[Section 0 and 1]{hochster}}] \label{def:spectral} A topological space $X$ is called a \emph{spectral space} if
\begin{itemize}
\item[(S1)] it is sober;
\item[(S2)] it is compact;
\item[(S3)] its compact opens are closed under intersections and form a basis.
\end{itemize}
A continuous map $f : X \to Y$ between spectral spaces is called \emph{spectral} if the inverse image of a compact open is again compact open. A subspace $Y \subseteq X$ of a spectral space $X$ is called a \emph{spectral subobject} if it is a spectral space and the inclusion morphism is spectral.
\end{definition}

If $X$ is a sober space, then $\sh(X)$ is coherent if and only if $X$ is spectral \cite[Remark 7.48]{johnstone-topos}.\footnote{Note that Johnstone does not use the terminology ``localic toposes'' here, instead he calls them ``toposes satisfying (SG)''.}

If $X$ is an algebraic dcpo with the Scott topology, then the open sets can all be written as $(p_i)_{i \in I}$ in the notation of (\ref{eq:ideal-notation}); the elements $p_i$ are called generators. Such an open set is clearly compact if and only if it can be generated by a finite set (in this case, we say that they are \emph{finitely generated}). In particular, the compact opens form a basis. So $X$ is a spectral space if and only if $X$ itself is finitely generated and $(p)\cap(q)$ is finitely generated for any two opens $(p)$ and $(q)$ generated by a single element \cite[Theorem 2.4]{priestley}.

This is the case, for example, if the finite elements of $X$ are a join-semilattice with a least element $0$, because then $X = (0)$ and $(p)\cap(q) = (p \join q)$. Another example is when $X$ has only a finite number of finite elements. As a counter-example, take $X$ the dcpo of filters on $(\NN,\leq)$. Then $X$ can be identified with $\NN \cup \{0\}$, with compact Scott open sets given by $\downarrow m = \{n \in \NN : n \leq m\}$. So $X$ is not compact.

In terms of the poset $P$ this means:

\begin{corollary}[{\cite[Remark 7.48]{johnstone-topos},\cite[Theorem 2.4]{priestley}}]
Let $P$ be a poset and let $X$ be its dcpo of filters. Then the following are equivalent:
\begin{enumerate}
\item $\psh(P) \simeq \sh(X)$ is a coherent topos;
\item $X$ is a spectral space;
\item there is a finite list $t_1,\dots,t_k \in P$ such that for all $p \in P$ there is a $t_i \geq p$,
and the same holds for the subposets $\downarrow\!p\,\, \cap \downarrow\!q$ with $p,q \in P$.
\end{enumerate}
\end{corollary}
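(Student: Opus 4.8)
The plan is to derive the corollary by chaining together results already established in the excerpt rather than proving anything from scratch. The equivalence (1) $\Leftrightarrow$ (2) is immediate: by the results of Section \ref{sect:presheaves-on-a-poset} we have $\psh(P)\simeq\sh(X)$ with $X$ a (sober) algebraic dcpo under the Scott topology, and by \cite[Remark 7.48]{johnstone-topos} a localic topos $\sh(X)$ with $X$ sober is coherent exactly when $X$ is spectral. So the only real content is the equivalence (2) $\Leftrightarrow$ (3), i.e.\ unwinding what ``$X$ is spectral'' means in terms of the combinatorics of $P$.

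For (2) $\Leftrightarrow$ (3), I would recall from the discussion just before the corollary that $X$ is always sober and that its compact opens (the finitely generated sets $(p_{i_1},\dots,p_{i_n})$) always form a basis closed under finite intersections once we know that the intersection of two \emph{principal} opens $(p)\cap(q)$ is finitely generated; this is the Priestley-style criterion \cite[Theorem 2.4]{priestley} quoted in the text. Hence $X$ is spectral if and only if (i) $X$ itself is a compact open, i.e.\ finitely generated, and (ii) $(p)\cap(q)$ is finitely generated for all $p,q$. Now I translate each clause back to $P$ using the notation of (\ref{eq:ideal-notation}). Since the points of $X$ are the filters on $P$ and $(p)=\{x\in X : p\le x\}$ corresponds to the principal filter at $p$, to say $X=(t_1,\dots,t_k)$ is precisely to say that every filter on $P$ contains some $t_i$; and because every filter contains a principal one, this is equivalent to saying that for every $p\in P$ there is some $t_i\ge p$. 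For clause (ii), the key observation is that $(p)\cap(q)$ consists of the filters containing both $p$ and $q$, and the finite elements of $X$ lying in $(p)\cap(q)$ are exactly the principal filters at elements of the subposet $\mathord\downarrow p\,\cap\mathord\downarrow q$ (the common lower bounds of $p$ and $q$ in $P$); so $(p)\cap(q)$ is finitely generated iff there is a finite set of common lower bounds $s_1,\dots,s_m$ such that every common lower bound lies below one of them — which is exactly the condition in (3) applied to the subposet $\mathord\downarrow p\cap\mathord\downarrow q$ playing the role of $P$.

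Assembling these translations gives the stated biconditional, and I would write the argument essentially as a sequence of ``this condition on $X$ says exactly this condition on $P$'' equivalences, citing the Priestley criterion and \cite[Remark 7.48]{johnstone-topos} for the passage between (1), (2), and the spectral-space axioms.

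The main obstacle I anticipate is purely bookkeeping: getting the variance right when passing between $P$, the filters on $P$, and the Scott topology on $X$, where (as the paper warns) the order on $X$ is opposite to that on $P$. In particular one must be careful that ``$(p)\cap(q)$ finitely generated'' really does correspond to the \emph{downward} intersection $\mathord\downarrow p\cap\mathord\downarrow q$ in $P$ and not to an upward one, and that a filter is pinned down by the principal filters it contains. None of this is deep, but it is exactly the place where an off-by-one-opposite error would creep in, so I would state the dictionary ``$(p)\leftrightarrow$ principal filter at $p$'', ``finite elements of $(p)\cap(q)\leftrightarrow$ common lower bounds of $p,q$ in $P$'' explicitly before invoking it.
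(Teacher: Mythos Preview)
Your proposal is correct and follows exactly the route the paper takes: the paper does not give a separate proof environment for this corollary, but the two paragraphs immediately preceding it contain precisely the argument you outline --- $(a)\Leftrightarrow(b)$ via Johnstone's remark, and $(b)\Leftrightarrow(c)$ by observing that compact opens in $X$ are the finitely generated ones and then invoking the Priestley criterion (spectral $\Leftrightarrow$ $X$ finitely generated and each $(p)\cap(q)$ finitely generated), which translates verbatim into condition $(c)$ on $P$ and on the subposets $\mathord\downarrow p \cap \mathord\downarrow q$. Your explicit dictionary between finite elements of $(p)\cap(q)$ and common lower bounds in $P$ is a useful elaboration of what the paper leaves implicit.
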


Hochster's paper \cite{hochster} is well-known for proving that every spectral space is homeomorphic to the spectrum of a commutative ring \cite[Theorem 6]{hochster}. This explains the terminology. In order to reach this result, Hochster introduced the so-called \emph{patch topology}, which ``classifies'' spectral subobjects.

\begin{definition}[{\cite{hochster}}] Let $X$ be a spectral space. Then the \emph{patch topology} is the new (finer) topology on $X$ with as subbasis the compact opens of $X$ and their complements. A closed set for the patch topology is called a \emph{patch}.
\end{definition}

\begin{proposition} \label{prop:patches-coherent-oft}
Let $P$ be a poset and let $X$ be its dcpo of filters. We assume that $\psh(P) \simeq \sh(X)$ is a coherent topos, or equivalently, that $X$ is a spectral space. Let $S \subseteq X$ be a sober subspace. The following are equivalent:
\begin{enumerate}
\item $S \subseteq X$ is a patch;
\item $S \subseteq X$ is a spectral subobject;
\item $K_S$ is a Grothendieck topology of finite type;
\item $\sh(S) \subseteq \sh(X)$ is a coherent subtopos.
\end{enumerate}
\end{proposition}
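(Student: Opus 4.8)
The plan is to deduce the four conditions from three equivalences: $(1)\Leftrightarrow(2)$, which is Hochster's theory of patches; $(2)\Leftrightarrow(4)$, which is the dictionary relating spectral spaces to coherent localic toposes; and $(2)\Leftrightarrow(3)$, the part that translates the above into the language of Grothendieck topologies, which I would prove by unwinding the description (\ref{eq:KY}) of $K_S$. As a preliminary, observe that since $X$ is sober (being spectral) and $S$ is sober by hypothesis, $S$ is closed for the strong topology \cite[Corollary 3.5]{keimel-lawson}; hence by Corollary~\ref{cor:correspondence-ep-subspace} the Grothendieck topology $K_S$ has enough points, $S_{K_S}=S$, and the sublocale of $\O(X)$ corresponding to $K_S$ is $\O(S)$. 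Consequently the subtopos of $\psh(P)\simeq\sh(X)$ determined by $K_S$ is $\sh(S)$, which is the subtopos appearing in $(4)$.

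For $(2)\Leftrightarrow(3)$ I would start from the bookkeeping of Section~\ref{sect:presheaves-on-a-poset}: the finite elements of $X$ are (the principal filters of) the elements $p\in P$; a sieve $L$ on $p$ corresponds bijectively, via $L\mapsto\tilde L=\bigcup_{q\in L}(q)$, to the open subsets of $(p)$; and $L$ is finitely generated precisely when $\tilde L$ is a compact open. By (\ref{eq:KY}), $L$ is $K_S$-covering iff $\tilde L\supseteq(p)\cap S$, that is, iff $\{(q):q\in L\}$ is an open cover of $(p)\cap S$. Since any open cover of $(p)\cap S$ may be refined, after intersecting it with $(p)$, to a cover by basic opens $(q)$ with $q\leq p$, it follows that $K_S$ is of finite type if and only if $(p)\cap S$ is a compact subspace of $X$ for every $p\in P$. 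It then remains to show that this last condition is equivalent to $S$ being a spectral subobject. One direction is immediate: a spectral subobject $S$ has $(p)\cap S$ equal to the preimage of the compact open $(p)\subseteq X$, hence compact. For the converse I would use that $X$ is spectral, so that $X$ and all $(p)\cap(q)$ are finitely generated, to conclude that when each $(p)\cap S$ is compact the sets $(p)\cap S$ form a basis of compact opens of $S$ closed under finite intersection, that $S$ is compact, and that the inclusion $S\hookrightarrow X$ is spectral; combined with the sobriety of $S$, this says exactly that $S$ is a spectral subobject.

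The equivalence $(1)\Leftrightarrow(2)$ is Hochster's characterization of the patches of a spectral space as exactly its spectral subspaces \cite{hochster} (a compact open of $X$ is clopen in the patch topology, and the patch topology is compact, which makes the inclusion of any patch a spectral map). For $(2)\Leftrightarrow(4)$ I would use the dictionary with toposes. If $S$ is a spectral subobject, then $\sh(S)$ is a coherent topos \cite[Remark 7.48]{johnstone-topos}, and the spectral inclusion $S\hookrightarrow X$ induces a coherent geometric morphism $\sh(S)\hookrightarrow\sh(X)$ \cite[Expos\'e vi]{sga4-2}, so $\sh(S)$ is a coherent subtopos. Conversely, suppose $\sh(S)\subseteq\sh(X)$ is a coherent subtopos. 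Then $\sh(S)$ is a coherent topos, so $S$ is spectral \cite[Remark 7.48]{johnstone-topos}; moreover the inverse image of the inclusion preserves coherent objects, and since the compact opens of $X$ are coherent objects of $\sh(X)$, their images $U\cap S$ are coherent subterminal objects of $\sh(S)$, i.e.\ compact opens of $S$. Hence $S\hookrightarrow X$ is spectral and $S$ is a spectral subobject.

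The step I expect to be the main obstacle is $(2)\Leftrightarrow(4)$, where one must be careful that coherence of a geometric morphism between these localic toposes is genuinely detected at the level of spaces by spectrality of the underlying map; this rests on understanding the coherent objects of $\sh(X)$ for $X$ spectral well enough. The reduction $(2)\Leftrightarrow(3)$ also demands some care, because the order on $X$ is opposite to the order on $P$, so one has to track which principal opens $(q)$ sit inside a given $(p)$.
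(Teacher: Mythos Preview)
Your proposal is correct and rests on the same ingredients as the paper's proof: Hochster for $(1)\Leftrightarrow(2)$, the observation that $K_S$ is of finite type precisely when each $(p)\cap S$ is compact, and the spectral--coherent dictionary for the passage between spaces and toposes. The organization differs slightly: you prove three equivalences $(1)\Leftrightarrow(2)$, $(2)\Leftrightarrow(3)$, $(2)\Leftrightarrow(4)$ centered on condition~$(2)$, whereas the paper runs the cycle $(b)\Rightarrow(c)\Rightarrow(d)\Rightarrow(b)$ after establishing $(a)\Leftrightarrow(b)$. Your route has the advantage of isolating the intermediate criterion ``$(p)\cap S$ is compact for every $p$'' as a clean characterization, and your verification that this forces $S$ to be a spectral subobject (using that $X$ and each $(p)\cap(q)$ are finitely generated) is a nice self-contained step. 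On the other hand, for $(2)\Rightarrow(4)$ you defer to \cite[Expos\'e vi]{sga4-2} for the fact that a spectral map induces a coherent geometric morphism; the paper instead gives this argument directly, showing that the compact opens $U\cap S$ yield a generating family of compact objects in $\sh(S)$ closed under finite limits, and that $i^*$ sends each $\y U$ to the coherent object $\y(U\cap S)$. Both approaches are short; yours trades a small amount of direct verification for an external citation.
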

\begin{proof}
$(a) \Leftrightarrow (b)$. This is in \cite[Section 2]{hochster}.

$(b) \Rightarrow (c)$. Let $L$ be a $K_S$-covering sieve on $p \in P$. In other words,
\begin{equation}
\widetilde{L} \cap (p) ~\supseteq~ S \cap (p).
\end{equation}
Now write $\widetilde{L} = (p_i)_{i \in I}$. Because $S \cap (p)$ is compact, we can find a finite subset $J \subseteq I$ such that the sieve corresponding to $(p_j)_{j \in J}$ is still a $K_S$-covering sieve.

$(c) \Rightarrow (d)$. Suppose that $K_S$ be a Grothendieck topology on $P$ of finite type. One can then show that $S\cap U$ is compact for every compact open $U \subseteq X$. The Yoneda embedding turns the compact opens $S \cap U \subseteq S$ into a generating set of compact objects, closed under pullbacks and containing the terminal object $S$ (this shows it is closed under all finite limits). So $\sh(S)$ is a coherent topos. We still have to show that the inclusion is coherent, i.e.~the pullback functor $i^* : \sh(X) \to \sh(S)$ preserves coherent objects. It is enough to show that $i^*(\y U) = \y(U \cap S)$ is coherent for $U$ a compact open of $X$. But $\y(U \cap S)$ is part of a generating family of compact objects, closed under finite limits. So it is coherent, see Definition \ref{def:coherent-topos}.

$(d) \Rightarrow (b)$. Suppose that $\sh(S) \subseteq \sh(X)$ is a coherent subtopos. Then $S$ is a spectral space by the remark above. We have to show that $U \cap S$ is compact open in $S$, for every compact open $U \subseteq X$. This follows by the assumption that the inclusion $\sh(S) \subseteq \sh(X)$ is coherent.
\end{proof}

We emphasize that the patch topology is finer than the (original) Scott topology, and coarser than the strong topology. In the situation of the above proposition: every closed set is a patch, and every patch is sober. Or: every closed subtopos is a coherent subtopos, and every coherent subtopos has enough points (Deligne's theorem).

\begin{table}[!ht]
\bgroup
\renewcommand{\arraystretch}{3}
\scalebox{0.85}{\begin{tabular}{c|c|c} 
Scott topology & Patch topology & Strong topology \\ \hline
\parbox[][][c]{4cm}{\centering ``Closed'' Grothendieck topologies} & 
\parbox[][][c]{4cm}{\centering Grothendieck topologies of finite type} & 
\parbox[][][c]{4cm}{\centering Grothendieck topologies with enough points} \\
\parbox[][][c]{4cm}{\centering Closed subtoposes} & 
\parbox[][][c]{4cm}{\centering Coherent subtoposes} & 
\parbox[][][c]{3cm}{\centering Subtoposes with enough points}
\end{tabular}}
\egroup
\label{table1}
\end{table}

It is clear from the definition of spectral subobjects that all finite subsets $S \subseteq X$ are patches. More examples will be given in the following section.

We end with a note on terminology. The Scott topology is called the \emph{localic topology} in the special case of the big cell in \cite{llb-covers}, \cite{azutop}. The strong topology on the big cell is called the \emph{pcfb-topology} in \cite{azutop}. Moreover, the strong topology is sometimes called the \emph{constructible topology} because it is the topology with the constructible sets as basis. Here a constructible set is defined as a finite union of locally closed sets. Sometimes in algebraic geometry \cite[D\'efinition 9.1.2]{ega3} a constructible set is defined as a finite union of subsets of the form $U \cap V^c$, where $U,V$ are so-called retrocompact opens. Then one could define the \emph{constructible topology} on $X$ (in the sense of Grothendieck) as the topology with the constructible sets (in the sense of Grothendieck) as basis \cite[\href{https://stacks.math.columbia.edu/tag/08YF}{Section 08YF}]{stacks-project}; in the case that $X$ is spectral this is precisely the patch topology on $X$. If $X$ is noetherian, then the strong topology and the patch topology agree, because in this case every open is compact. But in general the terminology ``constructible topology'' can be ambiguous, and it goes without saying that ``strong topology'' can mean different things as well (for example in \cite{nerode} it denotes what we call the patch topology\footnote{This is mentioned by Priestley in \cite{priestley}.}). Last but not least, in our situation the patch topology agrees with the so-called \emph{Lawson topology} on (special kinds of) posets, see \cite{compendium-2}, \cite{priestley}.

\section{More explicit description} \label{sect:more-explicit-description}

Let $P$ be a poset and $X$ its dcpo of filters, so $\psh(P)\simeq\sh(X)$. It is intuitively clear when a subset $S \subseteq X$ is closed for the Scott topology \cite[Remark II-1.4]{compendium-2}: $S$ is downwards closed and closed under directed suprema.

For the strong topology and the patch topology, it might be more difficult a priori to determine all closed sets.

Let $S \subseteq X$ be a subset closed for the strong topology. Take $s \notin S$. Then by definition we can find a (Scott) locally closed subset containing $s$, that does not intersect $S$. By shrinking the locally closed subset, we can assume it is of the form $(p) \cap \overline{s}$, where 
\begin{equation}
\overline{s} = \{ x ~:~ x \leq s \} \subseteq X
\end{equation}
is the Scott closure of $\{s\}$. In other words, there exists a $p \in P$, $p \leq s$, such that $S$ does not contain any $x$ with $p \leq x \leq s$. This statement is logically equivalent to the following:
\begin{equation}
(\forall p \in P \text{ with }p\leq s,~\exists x\in S\text{ with }p\leq x \leq s) ~\Rightarrow~ s \in S.
\end{equation}
Conversely, it easy to see that any subset with the above property is closed for the strong topology. So the strong topology is in some sense a topology expressing ``approximation from below''.

Now we look at the patch topology. We apply an idea of \cite{hochster} to our situation. Let $W$ be the Sierpinski space: it is the set $\{0,1\}$ with open sets $\varnothing, \{1\}, \{0,1\}$.\footnote{In comparison to the definition in \cite{hochster}, we swap $0$ and $1$. This is because Hochster uses the opposite definition of specialization order on a topology. We want that $0 \le 1$ for the specialization order.} For $X$ a topological space, the open subspaces of $X$ are precisely the sets $f^{-1}(1)$ for some continuous map $f : X \to W$. Recall that $X$ is spectral if and only if it is homeomorphic to a patch in $\prod_{i \in I} W$ for some index set $I$ \cite[Proposition 9]{hochster}.

Now we return to our case where $P$ is a poset and $X$ is its dcpo of filters that we assume to be spectral (for the Scott topology). Then we can say a little bit more. The finite elements of $X$ are just $P$, but with the opposite ordering. So we can consider the continuous map
\begin{equation}
\begin{tikzcd}
j~:~X \ar[r] & \prod_{p \in P} W
\end{tikzcd}
\end{equation}
defined by
\begin{equation}
x ~\mapsto~ \begin{cases}
1 \quad&\text{if }p \leq x, \\
0 \quad&\text{otherwise}.
\end{cases}
\end{equation}
in the component corresponding to $p \in P$. Here $\prod_{p \in P} W$ is itself the dcpo of filters on the poset $P'$ of finite subsets of $P$, with the opposite of the inclusion relation. The product topology agrees with the Scott topology. In the next section we will have a look at $\psh(P')$ in Example \ref{eg:power-set}.

We saw before that, in our case, $X$ is spectral if and only if it is compact and each $(p) \cap (q)$ is compact. The compact open sets in $\prod_{p \in P} W$ are the upwards closed sets generated by finitely many finite sets (we identify the elements of $\prod_{p \in P} W$ with subsets of $P$, the partial order is then inclusion of subsets). It is now straightforward to check that $j$ is a spectral map turning $X$ into a spectral subobject of $\prod_{p \in P} W$.

In the following we will always see $X$ as a patch in $\prod_{p \in P} W$ in the way described above. In particular:

\begin{lemma} \label{lmm:relative-patch-topology}
Let $X$ be a spectral space and let $S \subseteq X$ be a subset. Then $S$ is a patch in $X$ if and only if it is a patch in $\prod_{p \in P} W$. Moreover, $S$ is closed for the strong topology on $X$ if it is closed for the strong topology on $\prod_{p \in P} W$.
\end{lemma}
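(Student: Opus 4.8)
The plan is to reduce both claims to the general topological fact that a patch (resp.\ a strongly closed set) inside a spectral subobject is the same as a patch (resp.\ strongly closed set) of the ambient space, once we know the inclusion $j : X \hookrightarrow \prod_{p \in P} W$ is a spectral embedding. This last fact was established just before the lemma statement, so we may take it for granted. Write $Z = \prod_{p \in P} W$ for brevity.

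For the first (``if and only if'') claim, I would argue by comparing subbases. The patch topology on $Z$ has as subbasis the compact opens of $Z$ together with their complements; since $j$ is a spectral map, the compact opens of $X$ are exactly the sets $U \cap X$ for $U$ a compact open of $Z$, and hence the subbasic patch-opens of $X$ are exactly the traces on $X$ of the subbasic patch-opens of $Z$. Therefore the patch topology on $X$ is the subspace topology induced from the patch topology on $Z$. Now if $S \subseteq X$ is a patch in $X$, then $S$ is closed in $X$ for the patch topology, so $S = X \cap C$ for some patch $C$ of $Z$; since $X$ is itself a patch in $Z$ (again because $j$ is a spectral subobject, using the equivalence $(a)\Leftrightarrow(b)$ of Proposition \ref{prop:patches-coherent-oft} applied to the spectral space $Z$), $S = X \cap C$ is an intersection of two patches of $Z$, hence a patch of $Z$. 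Conversely, if $S \subseteq X$ is a patch of $Z$, then in particular $S$ is closed in the patch topology of $Z$, and since $S \subseteq X$, it is closed in the subspace patch topology on $X$, i.e.\ a patch of $X$. (One should also note $S$ really is a subset of $X$ throughout, which is part of the hypothesis.)

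For the second claim, the argument is even more direct and is the reason the statement is only one-directional. Recall (from Section \ref{sect:more-explicit-description}, or directly from the definition of the strong topology as the one generated by the original topology together with the lower sets in the specialization order, valid since these spaces are $T_0$) that the strong topology is functorial for continuous order-preserving maps in the following sense: a continuous map that reflects and preserves the specialization order pulls back strongly closed sets to strongly closed sets. The specialization order on $X$ is the order inherited from $Z$ (the Scott order in both cases is the finite-element ordering of $P^{\op}$, compatibly), so the inclusion $j$ is an order-embedding between $T_0$ spaces, continuous for the Scott topologies. Hence if $T \subseteq Z$ is strongly closed, $j^{-1}(T) = T \cap X$ is strongly closed in $X$. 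Applying this with $T$ a strongly closed subset of $Z$ that happens to lie in $X$ gives exactly: if $S \subseteq X$ is strongly closed in $Z$, then $S = S \cap X$ is strongly closed in $X$.

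The main obstacle is making precise, with the right citations, that $X$ is a patch in $Z$ and that the patch topology on $X$ is the subspace topology from $Z$ — both hinge on the already-proven statement that $j$ is a spectral embedding, but one must be careful that ``spectral subobject'' gives compact opens by restriction and not merely a basis of them. The rest is bookkeeping with subbases and with the specialization order. I would also remark why the converse of the strong-topology statement can fail: the strong topology on $Z$ can have strictly more closed sets when restricted to $X$ than the intrinsic strong topology of $X$ detects, because approximation-from-below inside $X$ is weaker than inside the larger $Z$; no claim about that direction is needed here.
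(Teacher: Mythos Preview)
Your strategy coincides with the paper's: reduce everything to the claim that the patch topology on $X$ is the subspace topology induced from the patch topology on $Z=\prod_{p\in P}W$, and then finish with closure under finite intersection of patches. The paper does exactly this, and dismisses the strong-topology direction as ``clear'' (your more careful argument via the specialization order is fine and proves the same thing).

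There is, however, a genuine gap at the crucial step. You write that ``since $j$ is a spectral map, the compact opens of $X$ are exactly the sets $U\cap X$ for $U$ a compact open of $Z$.'' Spectrality of $j$ only gives one direction: if $U$ is compact open in $Z$ then $j^{-1}(U)=U\cap X$ is compact open in $X$. It does \emph{not} give that every compact open of $X$ arises this way, and without that the subbasis comparison does not go through. You yourself flag this at the end (``one must be careful that `spectral subobject' gives compact opens by restriction and not merely a basis of them''), but you do not resolve it. The paper's proof fills precisely this hole, and concretely: for each $p\in P$ one has $j^{-1}\bigl((\{p\})\bigr)=(p)$, and since every compact open of $X$ is a finite union of finite intersections of sets $(p)$, and inverse images preserve finite unions and intersections, every compact open of $X$ is indeed $j^{-1}(U)$ for some compact open $U$ of $Z$. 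Add this sentence and your argument is complete.

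A minor remark: you do not need to invoke Proposition~\ref{prop:patches-coherent-oft} to know that $X$ is a patch in $Z$; the text immediately preceding the lemma already asserts that $j$ makes $X$ a spectral subobject of $Z$, and Hochster's equivalence (patch $\Leftrightarrow$ spectral subobject) then applies directly.
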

\begin{proof}
The last statement is clear.

We already mentioned that $f$ is spectral: if $U$ is a compact open of $\prod_{p \in P} W$, then $f^{-1}(U)$ is again compact open. We want to prove that the patch topology on $X$ is exactly the subspace topology with respect to the patch topology on $\prod_{p \in P} W$. It is enough to show that every compact open in $X$ can be written as $f^{-1}(U)$ for some compact open $U$. Use that $f^{-1}((\{p\})) = (p)$ for each $p \in P$, and the fact that taking inverse images preserves unions and intersections. Here $(\{p\})$ denotes the compact open set in $\prod_{p \in P} W$ generated by the singleton set $\{p\}$.
\end{proof}

\newpage

\begin{theorem}[{\cite[Theorem 2.1]{priestley}}] \label{thm:patch-vs-product} For each index set $I$, the patch topology on $\prod_{i \in I} W$ agrees with the product topology, where $W$ is equipped with the discrete topology.

When identifying $\prod_{i \in I} W$ with the set of all set-theoretic functions $I \to \{0,1\}$, the patch topology agrees with the topology of pointwise convergence.
\end{theorem}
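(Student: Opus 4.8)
The plan is to prove both assertions by comparing subbases directly. First I would fix the identification of $\prod_{i\in I}W$ with the power set $\P(I)$, sending $f$ to $f^{-1}(1)$; as recalled in the discussion preceding Lemma~\ref{lmm:relative-patch-topology}, under this identification the order is inclusion, the Scott topology coincides with the product of the Sierpi\'nski topologies, the finite elements are exactly the finite subsets $F\subseteq I$, and hence the sets $\uparrow F=\{x\in\P(I):F\subseteq x\}$ with $F$ finite form a basis of compact opens, every compact open being a finite union of such. I would also note in passing that $\prod_{i\in I}W$ is indeed spectral, so that its patch topology is defined: it is sober (an algebraic dcpo with the Scott topology), the whole space equals $\uparrow\varnothing$ and so is compact, and $\uparrow F\cap\uparrow G=\uparrow(F\cup G)$ shows the compact opens are closed under finite intersection.

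Next, write $\tau_{\mathrm{prod}}$ for the product topology with $W$ \emph{discrete}; it has as a subbasis the sets $S_i^1=\{x\in\P(I):i\in x\}$ and $S_i^0=\{x\in\P(I):i\notin x\}$ for $i\in I$. Write $\tau_{\mathrm{patch}}$ for the patch topology, with its defining subbasis consisting of all compact opens and their complements. For $\tau_{\mathrm{prod}}\subseteq\tau_{\mathrm{patch}}$: each $S_i^1=\uparrow\{i\}$ is a compact open, and each $S_i^0=X\setminus\uparrow\{i\}$ is the complement of a compact open, so both lie in the subbasis of $\tau_{\mathrm{patch}}$. For $\tau_{\mathrm{patch}}\subseteq\tau_{\mathrm{prod}}$: a compact open $U=\bigcup_{k=1}^{n}\uparrow F_k$ with each $F_k$ finite is $\tau_{\mathrm{prod}}$-open, since $\uparrow F_k=\bigcap_{i\in F_k}S_i^1$ is a finite intersection of subbasic $\tau_{\mathrm{prod}}$-opens and a finite union of opens is open; its complement $X\setminus U=\bigcap_{k=1}^{n}\bigl(X\setminus\uparrow F_k\bigr)=\bigcap_{k=1}^{n}\bigcup_{i\in F_k}S_i^0$ is likewise $\tau_{\mathrm{prod}}$-open, being a finite intersection of finite unions of subbasic $\tau_{\mathrm{prod}}$-opens. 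Thus every element of the subbasis of $\tau_{\mathrm{patch}}$ is $\tau_{\mathrm{prod}}$-open, giving the reverse inclusion and hence $\tau_{\mathrm{patch}}=\tau_{\mathrm{prod}}$.

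This proves the first statement. The second is then only a matter of unwinding definitions: under the identification of $\prod_{i\in I}W$ with the set of functions $I\to\{0,1\}$, the product topology with discrete factors is by definition the topology of pointwise convergence (a net $x_\lambda\to x$ precisely when $x_\lambda(i)=x(i)$ eventually for each $i$), so no further argument is needed.

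\emph{Main obstacle.} There is no deep difficulty here; the content is entirely the bookkeeping in the second inclusion, and its only genuine point is the De Morgan computation showing that the complement of a compact open is already a finite Boolean combination of the $S_i^0$ and $S_i^1$, hence $\tau_{\mathrm{prod}}$-open. The one place to stay alert is that the subbasis of $\tau_{\mathrm{patch}}$ uses \emph{all} compact opens and their complements, not merely the $\uparrow\{i\}$; but since every compact open is a finite union of sets $\uparrow F$ with $F$ finite, this is handled uniformly by the computation above.
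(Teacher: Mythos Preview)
Your argument is correct. The identification of $\prod_{i\in I}W$ with $\P(I)$, the description of the compact opens as finite unions $\bigcup_{k=1}^{n}\uparrow F_k$ with each $F_k$ finite (which the paper also records just before Lemma~\ref{lmm:relative-patch-topology}), and the two subbasis comparisons are all sound; the De~Morgan step is exactly the right way to see that complements of compact opens are $\tau_{\mathrm{prod}}$-open.

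There is nothing to compare your approach to within the paper: Theorem~\ref{thm:patch-vs-product} is quoted from \cite[Theorem~2.1]{priestley} and is not given a proof here. Your direct subbasis argument is in any case the standard one, and it fits seamlessly with the surrounding discussion in Section~\ref{sect:more-explicit-description}.
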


\begin{definition} \label{def:pointwise-convergence}
Let $P$ be a poset and let $X$ be its dcpo of filters. Let $(x_i)_i$ be a sequence of elements in $X$. Then we say that $(x_i)_i$ \emph{converges pointwise} to an element $x \in X$ if for all $p \in P$, there is a natural number $N$ such that
\begin{equation}
p \leq x_i ~\Leftrightarrow~ p \leq x
\end{equation}
for all $i \geq N$.
\end{definition}

Then by Lemma \ref{lmm:relative-patch-topology} and Theorem \ref{thm:patch-vs-product}, a subset $S \subseteq X$ is a patch if and only if it is closed under pointwise convergence.

We say that $F \subseteq P$ is a \emph{separating set (of finite elements)} if for all $p \in P$, we can write $(p)$ as an intersection $(f_1)\cap\dots\cap(f_k)$, with $f_1,\dots,f_k \in F$. Then it is easy to see that the map $X \to \prod_{f \in F} W$ is injective, continuous and spectral. Moreover, Lemma \ref{lmm:relative-patch-topology} still holds if we replace $P$ by $F$ in the statement. To prove this we need that every compact open is the inverse image of a compact open in $\prod_{f \in F} W$, but this follows from $F$ being a separating set.

We end with an application.

\begin{proposition} \label{prop:patch-metrizable}
Let $P$ be a countable poset and $X$ its dcpo of filters. If $X$ is spectral, then the patch topology on $X$ is metrizable.
\end{proposition}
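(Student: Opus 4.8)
The plan is to assemble the results of this section. Since $X$ is spectral, the map $j : X \to \prod_{p \in P} W$ constructed above realizes $X$ as a spectral subobject of $\prod_{p \in P} W$, and by Lemma \ref{lmm:relative-patch-topology} the patch topology on $X$ is precisely the subspace topology inherited from the patch topology on $\prod_{p \in P} W$. Hence it suffices to show that $\prod_{p \in P} W$, with its patch topology, is metrizable, since a subspace of a metrizable space is again metrizable.

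By Theorem \ref{thm:patch-vs-product}, the patch topology on $\prod_{p \in P} W$ coincides with the product topology in which $W = \{0,1\}$ is given the discrete topology. Because $P$ is countable, this is a countable product of a two-point discrete space. Fixing an injection $n : P \hookrightarrow \NN$, the formula
\[
d(x,y) = \sum_{p \in P} 2^{-n(p)}\, |x_p - y_p|
\]
defines a metric inducing the product topology; alternatively one may simply note that $\prod_{p \in P} W$ is homeomorphic either to a finite discrete space (if $P$ is finite) or to the Cantor space $\{0,1\}^{\NN}$ (if $P$ is countably infinite), both of which are metrizable. Combining these observations, $X$ with the patch topology is homeomorphic to a subspace of a metrizable space, hence metrizable.

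There is no substantial obstacle here; the statement is a formal consequence of the framework built up in this section. The only step requiring care is the identification, in the first paragraph, of the patch topology on $X$ with the subspace topology coming from $\prod_{p \in P} W$, which is exactly what Lemma \ref{lmm:relative-patch-topology} supplies. (If one wished to relax the countability hypothesis, the same argument applied to any countable separating set $F$ of finite elements, using the remark following Lemma \ref{lmm:relative-patch-topology}, would still give the conclusion; for countable $P$, of course $F = P$ already works.)
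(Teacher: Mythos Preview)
Your proof is correct and follows essentially the same route as the paper: embed $X$ with the patch topology into $\prod_{p\in P}W$ with its patch topology (which, by Theorem~\ref{thm:patch-vs-product}, is the topology of pointwise convergence), note that this is metrizable when $P$ is countable, and conclude since subspaces of metrizable spaces are metrizable. You supply considerably more detail than the paper's one-sentence argument (an explicit metric and the Cantor space identification), but the underlying strategy is identical.
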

\begin{proof}
We can embed $X$ with the patch topology as a subspace of the space of set-theoretic functions $P \to \{0,1\}$ with the pointwise convergence, which is a metric space if and only if $P$ is countable.\footnote{The general result is that for a metric space $Y$, the space of continuous functions $X \to Y$ is metrizable if and only if $X$ is hemicompact. Further, $P$ with the discrete topology is hemicompact if and only if it is countable.}
\end{proof}

\begin{corollary}
Let $P$ be a countable poset such that $\psh(P)$ is a coherent topos. Then there is a metric space $X$ with its closed subsets in natural bijection with the Grothendieck topologies of finite type on $P$.

\end{corollary}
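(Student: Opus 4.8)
The plan is to take for $X$ the dcpo of filters on $P$ equipped with the \emph{patch} topology, and then to read off the claimed bijection from Corollary~\ref{cor:correspondence-ep-subspace} and Proposition~\ref{prop:patches-coherent-oft}.

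First, since $\psh(P)\simeq\sh(X)$ is coherent by hypothesis, the dcpo of filters $X$ is a spectral space. As $P$ is moreover countable, Proposition~\ref{prop:patch-metrizable} says that the patch topology on $X$ is metrizable; I fix a compatible metric and call the resulting metric space $X_{\mathrm{patch}}$. By the very definition of the patch topology, a subset of $X_{\mathrm{patch}}$ is closed precisely when it is a patch of the spectral space $X$.

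It then remains to identify the patches of $X$ with the Grothendieck topologies of finite type on $P$, and the correspondence is $S\mapsto K_S$. Every patch $S\subseteq X$ is a sober subspace (this is noted right after Proposition~\ref{prop:patches-coherent-oft}; alternatively, the patch topology is coarser than the strong topology, so $S$ is strong-closed, and strong-closed subspaces of the sober space $X$ are exactly the sober ones). Hence Proposition~\ref{prop:patches-coherent-oft} applies to $S$, and the equivalence $(a)\Leftrightarrow(c)$ there shows that $S\mapsto K_S$ carries patches to Grothendieck topologies of finite type. It is injective on patches since, being strong-closed, they lie in the domain of the equivalence of Corollary~\ref{cor:correspondence-ep-subspace} between strong-closed subspaces and Grothendieck topologies with enough points, on which $S\mapsto K_S$ is in particular injective. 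For surjectivity, let $J$ be of finite type; the subtopos $\sh(P,J)\subseteq\psh(P)$ is then coherent and so has enough points by Deligne's theorem, whence $J=K_{S_J}$ for the sober subspace $S_J=\pts(\sh(P,J))$ of $X$, and Proposition~\ref{prop:patches-coherent-oft}\,$(c)\Rightarrow(a)$ applied to $S_J$ shows $S_J$ is a patch, so $J$ is in the image. The bijection so obtained is canonical and order-reversing (larger patches give smaller topologies), matching the $(-)^{\op}$ in Corollary~\ref{cor:correspondence-ep-subspace}, which I take to be the sense of ``natural bijection'' here.

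The only step that requires genuine thought is the fact invoked for surjectivity: that a Grothendieck topology of finite type on $P$ determines a coherent subtopos of $\psh(P)$ and hence, by Deligne, has enough points. In the poset setting ``of finite type'' means that every covering sieve refines to a finite cover and $\psh(P)$ is itself coherent, so the standard fact that coherence descends to such subtoposes applies; this is essentially the implication $(c)\Rightarrow(d)$ of Proposition~\ref{prop:patches-coherent-oft} together with the remark there that coherent subtoposes have enough points, and I would make it explicit rather than leave it implicit. Everything else is a routine assembly of results already established.
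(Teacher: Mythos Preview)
Your proposal is correct and follows exactly the route the paper intends: the corollary is stated without proof immediately after Proposition~\ref{prop:patch-metrizable}, and is meant to be read off from that proposition together with Proposition~\ref{prop:patches-coherent-oft}. You have spelled out the details faithfully, including the one genuinely subtle point (that an arbitrary finite-type $J$ has enough points), which the paper leaves implicit in the remark after Proposition~\ref{prop:patches-coherent-oft} about Deligne's theorem; your observation that the argument for $(c)\Rightarrow(d)$ really works at the level of the subtopos without first knowing $J=K_S$ is the right way to close this, and your suggestion to make it explicit is well taken.
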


\section{Cardinalities of sets of Grothendieck topologies}

As an application of the explicit description from the previous section, we will compute the cardinalities of the sets of Grothendieck topologies on a poset (resp. Grothendieck topologies with enough points, Grothendieck topologies of finite type, Grothendieck topologies giving rise to closed subtoposes). Let $P$ be a poset and let $X$ be its dcpo of filters. We consider the topos $\psh(P) \simeq \sh(X)$. We use the following notations:
\begin{align*}
\bcl\qquad\text{\----}\qquad  &\text{cardinality of set of closed subtoposes} \\
\bcoh\qquad\text{\----}\qquad &\text{cardinality of set of coherent subtoposes} \\
\bep\qquad\text{\----}\qquad   &\text{cardinality of set of subtoposes with enough points} \\
\bgt\qquad\text{\----}\qquad   &\text{cardinality of set of Grothendieck topologies} \\
\bp\qquad\text{\----}\qquad    &\text{cardinality of }P \\
\bx\qquad\text{\----}\qquad    &\text{cardinality of }X \\
\end{align*}

\begin{proposition} \label{prop:cardinalities}
With the notations as above, suppose that $X$ is an infinite spectral space. Then in the table
\begin{align*}
\ytableausetup{mathmode, boxsize=3em}
\begin{ytableau}
\none & \none & \none & 2^{(2^\bp)} \\
\none & \none & \bgt  & 2^\bcl \\
\none & 2^\bp  & \bep  & 2^\bx \\
\bx   & \bcl  & \bcoh & 2^\bp \\
\end{ytableau}
\end{align*}
the cardinality in each box is less than or equal to the cardinalities in the boxes directly to the right of it and directly above it.
\end{proposition}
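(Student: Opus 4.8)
The plan is to check the twelve comparisons individually; eleven of them are formal, and the remaining one, $2^\bp \le \bep$, carries the real content. First I would record some standing facts. Since $X$ is infinite, $P$ is infinite (a finite poset has only finitely many filters), so $\bp$ is an infinite cardinal. The finite elements of $X$ form a subset $F$ which, by the identification of finite elements with $P$ (with opposite ordering), satisfies $|F| = \bp$; in particular $\bp \le \bx$ via $P \cong F \hookrightarrow X$. The principal Scott-opens $(p)$, $p \in P$, are pairwise distinct and form a basis of the Scott topology (so there are exactly $\bp$ of them), and the compact Scott-opens are the finitely generated ones $(p_1,\dots,p_n)$, of which there are again $\bp$ (finite subsets of an infinite set).

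The formal comparisons fall into three groups. Inclusions of families of subtoposes give the vertical comparisons $\bcl \le \bcoh$ (closed subtoposes are coherent), $\bcoh \le \bep$ (coherent subtoposes have enough points, by Deligne), $\bep \le \bgt$; also $\bx \le \bcl$ because $X$ is $T_0$, so the point closures $\overline{\{x\}}$ furnish $|X|$ distinct closed subsets. Cardinality-of-basis bounds give $\bcl \le 2^\bp$ (every Scott-open is a union of the $\bp$ basic opens, hence is determined by the set of basic opens it contains) and $\bcoh \le 2^\bp$ (by Proposition~\ref{prop:patches-coherent-oft} coherent subtoposes correspond to patches, and the patch topology has a subbasis of size $\bp$, namely the compact opens and their complements, so it has at most $2^\bp$ open sets). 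Finally the comparisons of the form $2^\kappa \le 2^\lambda$ follow from the corresponding $\kappa \le \lambda$ already obtained: $2^\bp \le 2^\bx$ from $\bp \le \bx$; $2^\bx \le 2^\bcl$ from $\bx \le \bcl$; $2^\bcl \le 2^{(2^\bp)}$ from $\bcl \le 2^\bp$. Still in this vein, $\bep \le 2^\bx$ because a subtopos with enough points is coded by a subset of $X$ (Corollary~\ref{cor:correspondence-ep-subspace}), and $\bgt \le 2^\bcl$ because a Grothendieck topology on $P$ is a sublocale of $\O(X)$, hence is determined by a subset of the frame $\O(X)$, and $|\O(X)| = \bcl$ (Scott-opens biject with Scott-closeds).

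The one genuinely non-formal comparison, and the expected main obstacle, is $2^\bp \le \bep$: one must produce $2^\bp$ subtoposes with enough points, equivalently $2^\bp$ subsets of $X$ closed for the strong topology. Here I would invoke the description of strong-closedness from Section~\ref{sect:more-explicit-description}: $S \subseteq X$ is strong-closed as soon as, for every $s \notin S$, there is some $p \in P$ with $p \le s$ and $S \cap \{x : p \le x \le s\} = \varnothing$. The key observation is that a finite element of $X$ is a principal filter, say $s = \uparrow p_0$, and then taking $p = p_0$ gives $\{x : p_0 \le x \le s\} = \{x : \uparrow p_0 \subseteq x \subseteq \uparrow p_0\} = \{s\}$, so the defining condition is automatically satisfied at $s$ whenever $s \notin S$. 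Consequently every subset $S \subseteq X$ containing $X \setminus F$ is strong-closed, and there are $2^{|F|} = 2^\bp$ of these, pairwise distinct; hence $\bep \ge 2^\bp$. (This is consistent with the examples: for $P$ an infinite antichain the strong topology on $X$ is discrete, while for $P = (\NN,\ge)$ the strong-closed subsets are exactly the finite subsets of $F$ together with all subsets containing the unique non-finite point.)
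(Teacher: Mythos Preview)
Your proof is correct and follows essentially the same route as the paper: both verify the inequalities one by one, with the only substantive step being $2^\bp \le \bep$, handled via the observation that each finite element of $X$ is an isolated point for the strong topology (the paper phrases this as ``each singleton $\{p\}$ with $p \in P$ is strong-open'', which is exactly your computation $(p_0)\cap\overline{\{s\}}=\{s\}$). The only cosmetic differences are in the bounds $\bcoh \le 2^\bp$ (the paper counts finitely generated sieves directly, you count patch-opens via the subbasis) and $\bgt \le 2^\bcl$ (the paper bounds by $\bcl^\bcl$ via nuclei-as-functions, you by $2^\bcl$ via nuclei-as-fixed-point-sets); both pairs of arguments are equally valid.
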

\begin{proof}
For the inequalities $\bx \leq \bcl$ use that all point closures are distinct (for the sober topology). The inequalities $\bcl \leq \bcoh \leq \bep \leq 2^\bx$ follow from the Scott topology being coarser than the patch topology, which is coarser than the strong topology, which is coarser than the discrete topology. The inequality $\bcl \leq 2^\bp$ follows from the fact that every Scott closed subset is determined by a set of elements in $P$. Further, each singleton $\{p\}$ with $p \in P$ is open for the strong topology, this shows $2^p \leq \bep$. For each $p \in P$, the set of finitely generated sieves on it has cardinality $\bp$ (each sieve being uniquely determined by its finite set of generators). So $\bcoh \leq \bp^\bp = 2^\bp$. An arbitrary Grothendieck topology is determined by a so-called nucleus, which is a function from the frame of opens to itself. So $\bgt \leq \bcl^\bcl = 2^\bcl$. Obviously, $\bep \leq \bgt$. The inequalities in the last column follow directly from $\bp \leq \bx$ and the inequalities that we already proved.
\end{proof}

\subsection{Artinian posets}

First suppose that $P$ is an \emph{Artinian poset} (every subset has a minimal element).\footnote{A poset that is Artinian is sometimes said to be \emph{well-founded}. The Artinian property is equivalent to the descending chain condition: every chain $p_1 \geq p_2 \geq p_3 \geq \dots$ eventually stabilizes.} This is the situation for which all Grothendieck topologies are explicitly described in \cite{lindenhovius}. In this case, all filters are principal, so $X = P^\op$. This also means that the open sets in $X$ are just the upwards closed sets (or equivalently downwards closed sets in $P$). By \cite[Theorem 10.1.13]{lindenhovius-thesis} every Grothendieck topology is of the form $K_S$ for some subset $S \subseteq P^\op = X$.\footnote{For subsets $S \subseteq P^\op$, the Grothendieck topology $J_S$ from \cite{lindenhovius} and \cite{lindenhovius-thesis} agrees with the Grothendieck topology $K_S$ as in (\ref{eq:KY}). The latter is inspired by the first one, with the only difference that it is defined for all subsets of $X$ rather than subsets of $P^\op \subseteq X$.}

Assume $P$ infinite. Using a diagram like in Proposition \ref{prop:cardinalities}: $\bp=\bx$ and in every gray box below, the cardinality is equal to $2^\bp$.
\begin{align*}
\ytableausetup{mathmode, boxsize=3em}
\begin{ytableau}
\none & \none & \none & 2^{(2^\bp)} \\
\none & \none & *(llgray) \bgt  & 2^\bcl \\
\none & *(llgray) 2^\bp  & *(llgray) \bep  & *(llgray) 2^\bx \\
\bx   & \bcl  & \bcoh & *(llgray) 2^\bp \\
\end{ytableau}
\end{align*}

\begin{example}[Almost discrete posets] Let $P$ be an infinite set containing a maximal element $1$, with $p \leq 1$ for each $p \in P$ as only relations. Then $P$ is clearly Artinian and $X$ is spectral. All subtoposes of $\psh(P)$ have enough points. There are clearly $2^\bp$ closed subspaces (equiv.~closed subtoposes).
\begin{align*}
\ytableausetup{mathmode, boxsize=3em}
\begin{ytableau}
\none & \none & \none & *(lgray) 2^{(2^\bp)} \\
\none & \none & *(llgray) \bgt  & *(lgray) 2^\bcl \\
\none & *(llgray) 2^\bp  & *(llgray) \bep  & *(llgray) 2^\bx \\
\bx   & *(llgray) \bcl  & *(llgray) \bcoh & *(llgray) 2^\bp \\
\end{ytableau} & \qquad
\begin{array}{|l|} \hline
\text{white = } \bp \\ \text{light gray = } 2^\bp \\ \text{dark gray = } 2^{(2^\bp)} \\
\hline
\end{array}
\end{align*}
\end{example}

\begin{example}[The natural numbers] Let $P$ be the poset of natural numbers. In this case, there are countably many closed subtoposes (one for each natural number $n \in P$). So $\bp = \bx = \bcl$. Further, like in the previous example, singletons are open in the patch topology. So the patch topology agrees with the discrete topology, which shows $\bcoh = 2^\bp$.
\begin{align*}
\ytableausetup{mathmode, boxsize=3em}
\begin{ytableau}
\none & \none & \none & *(lgray) 2^{(2^\bp)} \\
\none & \none & *(llgray) \bgt  & *(llgray) 2^\bcl \\
\none & *(llgray) 2^\bp  & *(llgray) \bep  & *(llgray) 2^\bx \\
\bx   & \bcl  & *(llgray) \bcoh & *(llgray) 2^\bp \\
\end{ytableau} & \qquad
\begin{array}{|l|} \hline
\text{white = } \omega \\ \text{light gray = } 2^\omega \\ \text{dark gray = } 2^{(2^\omega)} \\
\hline
\end{array}
\end{align*}
The same computations can be done for $P$ an arbitrary infinite ordinal.
\end{example}

\begin{example}
Let $P$ be the poset of finite subsets of some infinite set $I$, with the inclusion relation. Then the cardinality of $P$ is equal to the cardinality of $I$. For each subset $I' \subseteq I$ we can define the downwards closed set
\begin{equation}
\left\{\vphantom{\int}  \{i\} ~:~ i \in I'  \right\} \cup \{ \varnothing \}.
\end{equation}
This shows that there are $2^\bp$ open sets (so $2^\bp$ closed sets as well).
\begin{align*}
\ytableausetup{mathmode, boxsize=3em}
\begin{ytableau}
\none & \none & \none & *(lgray) 2^{(2^\bp)} \\
\none & \none & *(llgray) \bgt  & *(lgray) 2^\bcl \\
\none & *(llgray) 2^\bp  & *(llgray) \bep  & *(llgray) 2^\bx \\
\bx   & *(llgray) \bcl  & *(llgray) \bcoh & *(llgray) 2^\bp \\
\end{ytableau} & \qquad
\begin{array}{|l|} \hline
\text{white = } \bp \\ \text{light gray = } 2^\bp \\ \text{dark gray = } 2^{(2^\bp)} \\
\hline
\end{array}
\end{align*}
\end{example}

\subsection{Other posets}

\begin{example} \label{eg:opposite-natural-numbers}
Let $P$ be the opposite of the poset of natural numbers. There is exactly one non-principal filter: the set $P$ itself. So $X = \{0,1,2,3,\dots \} \cup \{\infty\}$. Clearly $\bp = \bx = \bcl$. Using Proposition \ref{prop:cardinalities} we then get $\bgt = \bep = 2^\bp$. All open sets in $X$ are compact. So the patch topology agrees with the strong topology, and in particular $\bcoh = \bep$.
\begin{align*}
\ytableausetup{mathmode, boxsize=3em}
\begin{ytableau}
\none & \none & \none & *(lgray) 2^{(2^\bp)} \\
\none & \none & *(llgray) \bgt  & *(llgray) 2^\bcl \\
\none & *(llgray) 2^\bp  & *(llgray) \bep  & *(llgray) 2^\bx \\
\bx   &  \bcl  & *(llgray) \bcoh & *(llgray) 2^\bp \\
\end{ytableau} & \qquad
\begin{array}{|l|} \hline
\text{white = } \omega \\ \text{light gray = } 2^\omega \\ \text{dark gray = } 2^{(2^\omega)} \\
\hline
\end{array}
\end{align*}
\end{example}

\begin{example}[Power set] \label{eg:power-set}
Let $P$ be the poset of finite subsets of some infinite set $I$, with the opposite of the inclusion relation. Then $X = \P(I)$ with the inclusion relation. So $\bx = 2^\bp$ and by Proposition \ref{prop:cardinalities} this shows $\bcl = \bcoh = 2^\bp$. We still have to determine $\bep$ and $\bgt$. Note that every upwards closed set in $X$ is closed for the strong topology (equiv.~sober). Recall that an antichain in $X$ is a subset such that all elements in it are pairwise incomparable. Sending an antichain to the upwards closed subset generated by it, is an injective operation, because the antichain can be recovered as the set of minimal elements.\footnote{It is not surjective, consider for example the poset of real numbers and the upwards closed set $(0,+\infty)$.}

In order to find how many antichains there are, we use a trick that seems to be well-known (at least in the case $I = \NN$). Write $I = I_1 \sqcup I_2$ with $|I_1|=|I_2|=|I|$ and take a bijection $\psi:I_1\to I_2$. Consider the set
\begin{equation}
A = \{ x \in X ~:~ i \in x \Leftrightarrow \psi(i) \notin x  \}.
\end{equation}
Note that $A$ is an antichain: if $x \subseteq y$, $x,y \in A$, then $a \in x \Rightarrow a \in y$ but also $a \notin x \Rightarrow a \notin y$. So $x = y$. Clearly, $|A| = 2^{|I_1|} = 2^\bp$, and each subset of $A$ is again an antichain. This shows that there are at least $2^{(2^\bp)}$ antichain in $X$, and at least as many upwards closed sets. So we find $\bep = \bgt = 2^{(2^\bp)}$.
\begin{align*}
\ytableausetup{mathmode, boxsize=3em}
\begin{ytableau}
\none & \none & \none & *(lgray) 2^{(2^\bp)} \\
\none & \none & *(lgray) \bgt  & *(lgray) 2^\bcl \\
\none & *(llgray) 2^\bp  & *(lgray) \bep  & *(lgray) 2^\bx \\
*(llgray) \bx   & *(llgray) \bcl  & *(llgray) \bcoh & *(llgray) 2^\bp \\
\end{ytableau} & \qquad
\begin{array}{|l|} \hline
\text{light gray = } 2^\bp \\ \text{dark gray = } 2^{(2^\bp)} \\
\hline
\end{array}
\end{align*}
\end{example}

\begin{example}[The big cell] \label{eg:big-cell}
Let $P$ be the poset of positive natural numbers and the opposite of the division relation, so $m \leq n$ in $P$ if and only if $n | m$. Then $X = \SS$ is the poset of supernatural numbers under the division relation, see \cite[Theorem 1]{llb-covers}. To determine $\bgt$, $\bep$ and $\bcoh$, it is enough to know that there is a subposet $\P(I) \subset X$ where $I$ is the set of prime numbers. So $\bx = 2^\bp$ and this shows $\bcl = \bcoh = 2^\bp$. Each antichain in $\P(I)$ produces an antichain in $X$, so $\bep = \bgt = 2^{(2^{\omega})}$.
\begin{align*}
\ytableausetup{mathmode, boxsize=3em}
\begin{ytableau}
\none & \none & \none & *(lgray) 2^{(2^\bp)} \\
\none & \none & *(lgray) \bgt  & *(lgray) 2^\bcl \\
\none & *(llgray) 2^\bp  & *(lgray) \bep  & *(lgray) 2^\bx \\
*(llgray) \bx   & *(llgray) \bcl  & *(llgray) \bcoh & *(llgray) 2^\bp \\
\end{ytableau} & \qquad
\begin{array}{|l|} \hline
\text{light gray = } 2^\omega \\ \text{dark gray = } 2^{(2^\omega)} \\
\hline
\end{array}
\end{align*}
\end{example}

\begin{example}
Let $P$ be the poset of nonnegative real numbers with the opposite partial order. Then the filters on $P$ are $[0,r]$ or $[0,r)$ for some $r \in \RR$ or $[0,+\infty)$. In $X$ these filters will be denoted by $r$ resp.~$r_-$ resp.~$\infty$. Clearly $\bx = \bp = 2^\omega$. With a similar consideration, we see $\bcl = \bp$. A subbasis for the patch topology  on $X$ is given by the intervals $[a,+\infty)$ and their complements $[0,a_-]$ for $a \geq 0$. So a basis of the patch topology is given by the intervals $[a,b_-]$ for $a < b$, $a,b \geq 0$. Each open set $U$ for the patch topology is then a disjoint union of open, closed and half-open intervals in $X$. Picking a rational number in each path-component shows that there are only countable many path-components in $U$. So to each open set we can associate a countable subset of the set of intervals in $X$. The set of intervals in $X$ has cardinality $2^\omega$, so the set of countable subsets of it has cardinality $2^\omega$ as well. We find that $\bcoh = 2^\omega = \bx$.
\begin{align*}
\ytableausetup{mathmode, boxsize=3em}
\begin{ytableau}
\none & \none & \none & *(lgray) 2^{(2^\bp)} \\
\none & \none & *(llgray) \bgt  & *(llgray) 2^\bcl \\
\none & *(llgray) 2^\bp  & *(llgray) \bep  & *(llgray) 2^\bx \\
\bx   &\bcl  & \bcoh & *(llgray) 2^\bp \\
\end{ytableau} & \qquad
\begin{array}{|l|} \hline
\text{white = } 2^\omega \\ \text{light gray = } 2^{(2^\omega)} \\ \text{dark gray = } 2^{(2^{(2^\omega)})} \\
\hline
\end{array}
\end{align*}
\end{example}

\begin{remark}
For each of the inequalities resulting from Proposition \ref{prop:cardinalities}, we have now given an example where the inequality is strict, with the exception of $\bep \leq \bgt$. It is unknown to the author if it is possible to have $\bep < \bgt$. 

Note that there are in each example at most three cardinalities (colors). This is no surprise: it is consistent with ZFC that there are at most three cardinalities $\bp \leq \alpha \leq 2^{(2^\bp)}$ (Generalized Continuum Hypothesis). So in ZFC it is not possible to construct an example with four colors or more.
\end{remark}

\section*{Acknowledgements}

I would like to thank Lieven Le Bruyn for discussions about Grothendieck topologies on the big cell (the special case that leaded to this paper).

\bibliographystyle{amsplainarxiv}
\bibliography{paper5}
\end{document}